\newtheorem{thm}{Theorem}
\date{}
\title{A note on Hutchinson operator in $\mathbf{T_1}$ compact topological spaces}
\address{Micha{\l} Morayne: Faculty of Fundamental Problems of Technology, Wroc{\l}aw University of Science and Technology, 50-370 Wrocław, Poland.}
\address{Robert Ra{\l}owski: Faculty of Pure and Applied Mathematics, Wrocław University of Science and Technology, 50-370 Wroc{\l}aw, Poland.}
\author{Micha{\l} Morayne}
\author{Robert Ra{\l}owski}
\thanks{The work of Robert Rałowski has been partially financed by  grants 8211204601, 8211104160, MPK: 9120730000 from the Faculty of Pure and Applied Mathematics, Wrocław University of Science and Technology}
\email{michal.morayne@pwr.edu.pl}
\email{robert.ralowski@pwr.edu.pl}
\keywords{fixed point; $T_1$ space; compact space; attractor, contraction, Hutchinson operator}
\subjclass[2010]{54D10, 54D30, 54E52, 54H25, 54H20}
\begin{document}

\begin{abstract}
In $T_1$ compact topological spaces the Hutchinson operator of a contractive IFS 
(iterated function system; a finite family of closed mappings from the space into itself) may not be closed. Nevertheless, the Hutchinson operator of a contractive IFS has always a unique fixed point.
\end{abstract}

\maketitle

 An {\it iterated function system} (for short: IFS) is any finite family  $\mathcal{F}=\{f_1,\ldots,f_m\}$ of closed mappings from $X$ to $X$. We say that an IFS $\mathcal{F}=\{f_1,\ldots,f_m\}$ is {\it contractive} if 
for any open covering $\mathcal{U}=\{U_\lambda:\lambda\in\Lambda\}$ of $X$ there exists $n\in\mathbb{N}$ such that for any sequence $(i_1,\ldots,i_n)$, $1\leq i_j\leq m$, the set $f_{i_1}\circ\ldots\circ f_{i_n}[X]$ is contained in some element $U_\alpha$ of the covering   $\mathcal{U}$. 
A mapping $f:X\to X$ is a {\it topological contraction} if the IFS consisting of this one mapping $f$ is contractive. Equivalently (Theorem 4 in \cite{MMRR}), $f:X\to X$ is a topological contraction if $f$ is closed and

\vspace{0.2 cm}

\noindent $(\star)$ for each two different points $x,y\in X$ there exists $n\in\mathbb{N}$ such that
$f^n[X]\subseteq X\setminus\{x\}$ or $f^n[X]\subseteq X\setminus\{y\}$.

\vspace{0.2 cm}

It follows from the Lebesgue number lemma that every IFS consisting of Lipschitz  contractions on a compact metric space  is contractive in the above sense. Let $2^X$ be the family of all closed nonempty 
subsets of $X$.  The Vietoris topology on $2^X$ is generated by the basis consisting of all sets of the form 
$$S(V_0;V_1,\ldots,V_k):=\{K\in2^X:K\subseteq V_0, K\cap U_i\neq\emptyset,1\leq i\leq k\},$$
where $k\in\mathbb{N}$ and $V_0,V_1,\ldots,V_k$ are open subsets of $X$. The space $2^X$ with the Vietoris topology is called the {\it hyperspace} of $X$. It is known that if $X$ is a $T_1$ compact space then so is its hyperspace $2^X$ (\cite{Mi}). 

Let $\mathcal{F}=\{f_1,\ldots,f_m\}$ be an IFS on a $T_1$ space $X$. The {\it Hutchinson operator $F: 2^X \to 2^X$ induced by $\mathcal{F}$} 
is defined by the formula 
$$F(K):=\bigcup_{i=1}^mf_{i}[K]$$
(see \cite{H}). If $K\in2^X$ is a fixed point of the Hutchinson operator induced by an IFS $\mathcal{F}$ we call $K$ an {\it atractor} of  $\mathcal{F}$.

In \cite{MMRR} the following theorems were proven (Theorems 3, 6, and Corollary 7 in \cite{MMRR}, resp.).

\vspace{0.4 cm}

\noindent{\bf Theorem A.} {\it
If $X$ is a $T_1$ compact space, $f:X\to X$ is a topological contraction, then 
there exists a unique fixed point for the mapping $f$.} 

\vspace{0.4 cm}

\noindent{\bf Theorem B.} {\it 
Let $X$ be a $T_1$ compact space. Let $\mathcal{F}$ be a contractive IFS on $X$. Then, if the Hutchinson operator $F$ induced by $\mathcal{F}$ is closed, then $F$ is a topological contraction on $2^X$.} 

\vspace{0.4 cm}

\noindent{\bf Theorem C.} {\it
If $X$ is a $T_1$ compact space then every contractive IFS inducing a closed Hutchinson operator has a unique attractor; in other words: the Hutchinson operator induced by this IFS has a unique fixed point.}

\vspace{0.4 cm}

Of course, Theorem C follows from Theorems A and B.

The attractor in the conclusion of Theorem C is unique. One should add that the very existence of an attractor for an IFS on $T_1$ compact space does not require any additional assumptions imposed on the IFS (for instance, that it is contractive). Namely the following theorem (Theorem 5 
in \cite{MMRR}) holds.

\vspace{0.4 cm}

\noindent{\bf Theorem D.} {\it
If $X$ is a $T_1$ compact space then any IFS on $X$ has an attractor; in other words the  Hutchinson operator $F$ induced by $\mathcal{F}$ has a fixed point.}  

\vspace{0.4 cm}

There arises a question whether the Hutchinson operator induced by a contractive IFS is always closed.
We  shall construct an example of a $T_1$ compact topological space and a contractive IFS on this space inducing a Hutchinson operator which is not closed. The existence of such a space and IFS we state as a theorem.

\begin{thm}\label{Hutchinson-example}
There exists a $T_1$ compact space $X$ and a contractive IFS inducing a Hutchinson operator  which is not closed (as a mapping from $2^X$ to $2^X$). 
\end{thm}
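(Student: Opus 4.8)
\emph{Approach.} The plan is to produce an explicit example. Let $X=\mathbb{N}\cup\{\infty\}$ be the one-point compactification of the countable discrete space $\mathbb{N}=\{1,2,3,\dots\}$; this is a compact metrizable space, in particular a $T_1$ compact space, and a subset of $X$ is closed precisely when it is finite or contains $\infty$. Define $f\colon X\to X$ by $f(\infty)=\infty$, $f(2k)=2k+2$ for $k\ge 1$, and $f(2k-1)=2$ for $k\ge 1$: the even numbers form a chain ascending to $\infty$, while every odd number is collapsed onto its head $2$. I claim that the one-element IFS $\mathcal{F}=\{f\}$ is contractive and that its Hutchinson operator $F$, given here by $F(K)=f[K]$, is not closed as a map $2^X\to 2^X$.

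\emph{Verifications about $f$.} First I would check that $f$ is a closed map: a finite closed set has finite, hence closed, image, and a closed set containing $\infty$ has image containing $f(\infty)=\infty$, hence closed. Note however that $f$ is \emph{not} continuous — it is discontinuous at $\infty$, since $2k-1\to\infty$ while $f(2k-1)=2\not\to\infty$ — and this discontinuity is exactly what will be exploited (if $f$ were continuous, $F$ would be continuous, hence automatically closed, $2^X$ being a compact Hausdorff space here). Next I would verify contractivity of $\{f\}$: a straightforward induction gives $f^n[X]=\{2m:m\ge n\}\cup\{\infty\}$, so the sets $f^n[X]$ decrease to $\{\infty\}$; given any open cover $\mathcal{U}$ of $X$, choosing $U\in\mathcal{U}$ with $\infty\in U$, the set $U$ contains a cofinite subset of $\mathbb{N}$ together with $\infty$, whence $f^n[X]\subseteq U$ for all large $n$. (Equivalently, $f$ is closed and satisfies condition $(\star)$, since for distinct $x,y$ at least one is $\ne\infty$ and therefore leaves all $f^n[X]$ eventually; so $f$ is a topological contraction by the criterion recalled in the introduction.)

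\emph{The witness to non-closedness.} I would take
$$\mathcal{A}=\bigl\{\,\{\,2k-1,\;2k,\;\infty\,\}:k\ge 1\,\bigr\}\cup\bigl\{\,\{\infty\}\,\bigr\}\subseteq 2^X .$$
Since $2k-1\to\infty$ and $2k\to\infty$ in $X$, the sets $\{2k-1,2k,\infty\}$ converge in $2^X$ to $\{\infty\}$, and $\{\infty\}$ is their only accumulation point, so $\mathcal{A}$ is closed. Now
$$F[\mathcal{A}]=\bigl\{\,\{\,2,\;2k+2,\;\infty\,\}:k\ge 1\,\bigr\}\cup\bigl\{\,\{\infty\}\,\bigr\},$$
and $\{2,2k+2,\infty\}\to\{2,\infty\}$ as $k\to\infty$, so $\{2,\infty\}\in\overline{F[\mathcal{A}]}$. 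But $\{2,\infty\}\notin F[\mathcal{A}]$: every element of $F[\mathcal{A}]$ has either exactly one point ($\{\infty\}$) or exactly three points ($\{2,2k+2,\infty\}$ with $k\ge 1$), whereas $\{2,\infty\}$ has two. Hence $F[\mathcal{A}]$ is not closed, so $F$ is not a closed map, which proves the theorem. (If a genuine $m$-element IFS with $m\ge 2$ is wanted, one may adjoin constant maps attached to points of $\mathbb{N}$ not appearing above; contractivity is preserved because every composition using such a map has one-point image, and the same $\mathcal{A}$ still works.)

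\emph{The main obstacle.} The real difficulty is choosing $f$ so that three competing demands hold simultaneously: $f$ must be discontinuous (else $F$ is closed for free), yet $f$ must still be a closed map, and the iterates $f^n[X]$ must still shrink to a point so that $\{f\}$ is contractive. The observation making all three compatible is that in $X$ every infinite closed set contains the single limit point $\infty$, so any $f$ fixing $\infty$ is automatically closed; discontinuity is then obtained cheaply by sending infinitely many points near $\infty$ back to a fixed low point, and contractivity by routing the remaining points along a chain that climbs to $\infty$. Once $f$ is fixed, the remaining steps — that $\mathcal{A}$ is genuinely closed in $2^X$ and that $F[\mathcal{A}]$ genuinely omits the limit set $\{2,\infty\}$ — are routine.
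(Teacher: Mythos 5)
Your proof is correct, but it takes a genuinely different route from the paper's. The paper works in a non-Hausdorff $T_1$ compactification of $\mathbb{N}$ with two ``limit'' points $a,b$, uses a two-map IFS $\{f,g\}$, and witnesses non-closedness in the cheapest way possible: the image $F[2^X]$ of the whole hyperspace is shown not to be closed (its closure picks up the singleton $\{a\}$). You instead stay inside a compact metrizable space, the one-point compactification of discrete $\mathbb{N}$, use a single map that is closed but not continuous (legitimate, since the paper's notion of IFS requires only closedness and explicitly allows a one-element IFS via the definition of topological contraction), and witness non-closedness with a hand-picked closed family $\mathcal{A}$ whose image omits its limit $\{2,\infty\}$. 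That extra work is genuinely needed in your setting: in your example $F[2^X]$ is exactly the family of all closed subsets of $\{2,4,6,\dots\}\cup\{\infty\}$, which \emph{is} closed in $2^X$, so the paper's shortcut is unavailable; conversely, your construction buys something the paper's does not, namely that the failure of closedness already occurs in a compact metric space (for closed, non-continuous maps), whereas the paper's space is only $T_1$. Your individual verifications are sound: closedness of $f$ (the closed sets are the finite sets and the sets containing $\infty$), contractivity via $f^n[X]=\{2m:m\ge n\}\cup\{\infty\}$ together with cofiniteness of neighbourhoods of $\infty$, closedness of $\mathcal{A}$, and the cardinality argument excluding $\{2,\infty\}$ from $F[\mathcal{A}]$ (the sequential reasoning is justified because $2^X$ is compact metrizable here). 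The only loose point is the closing parenthetical about enlarging to an IFS with $m\geq 2$ maps: the constant value must be chosen with care --- a constant map onto $4$, for instance, makes the limit of $F(\{2k-1,2k,\infty\})$ equal to $\{2,4,\infty\}=F(\{1,2,\infty\})$, so that particular witness $\mathcal{A}$ no longer certifies non-closedness, while a constant map onto an odd point does work; since the definitions permit $m=1$, this remark is inessential to the proof.
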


\begin{proof} Let 
$$\textrm{ODD}=\{1,3,5,\ldots\}$$
and
$$\textrm{EVEN}=\{2,4,6,\ldots\}.$$
Let 
$$X=\mathbb{N}\cup\{a,b\}$$
where $a\neq b$, $a,b\notin\mathbb{N}$, and the set $A\subseteq X$ is open if:

\vspace{0.2 cm}

\noindent i) $A\subset \textrm{ODD}$,

\vspace{0.2 cm}

or  

\vspace{0.2 cm}

\noindent ii) $A\subseteq\mathbb{N}$ and the set $\textrm{ODD}\setminus A$ is finite,

\vspace{0.2 cm}

or  

\vspace{0.2 cm}

\noindent iii) $A\cap\{a,b\}\neq\emptyset$ and $A$ is a co-finite set in $X$.

\vspace{0.2 cm}

We define an IFS $\mathcal{F}$ as 
$$\mathcal{F}=\{f,g\},$$
where 
$$f(x)=\left\{
\begin{array}{ccc}
a&\textrm{if}&x=a,b,\\
b&\textrm{if}&x=2n,n\in\mathbb{N},\\
2n&\textrm{if}&x=2n-1,n\in\mathbb{N},\\
\end{array} 
\right.
$$
and
$$g(x)=\left\{
\begin{array}{ccc}
b&\textrm{if}&x=a,b,\\
a&\textrm{if}&x=2n,n\in\mathbb{N},\\
2n&\textrm{if}&x=2n-1,n\in\mathbb{N}.
\\
\end{array} 
\right.
$$

\vspace{0.2 cm}

Let $E\subseteq X$ be a nonempty closed set. If $E$ is finite,
then $f[E]$ is also finite and, therefore, closed. 
If $E$ is infinite, then $a,b\in E$ and $E\cap\textrm{EVEN}\neq\emptyset$.
Then $\{a,b\}\subseteq f[E]\subseteq\{a,b\}\cup\textrm{EVEN}$ which is a closed set. 

Thus $f$ is a closed mapping. Analogously, one argues that $g$ is closed.

Now let us consider a composition $h_n\circ h_{n-1}\circ\ldots\circ h_2\circ h_1$, where each $h_i$ is equal to either $f$ or $g$.
Because $f[X]=g[X]=\textrm{EVEN}\cup\{a,b\}$ we have $h_1[X]=\textrm{EVEN}\cup\{a,b\}$. Because 
$$f[\textrm{EVEN}\cup\{a,b\}]=g[\textrm{EVEN}\cup\{a,b\}]=\{a,b\}$$
we have 
$$h_2\circ h_1[X]=\{a,b\}.$$
Because $f[\{a,b\}]=\{a\}$ and $g[\{a,b\}]=\{b\}$ we have for $n\geq3$
$$h_n\circ h_{n-1}\circ\ldots\circ h_2\circ h_1[X]=\{a\}$$
or
$$h_n\circ h_{n-1}\circ\ldots\circ h_2\circ h_1[X]=\{b\},$$
hence the IFS $\mathcal{F}=\{f,g\}$ is contractive.

Let $F$ be the Hutchinson operator induced by $\mathcal{F}$, namely $F:2^X\to2^X$ is defined as 
$$F(E):=f[E]\cup g[E].$$
We shall show that $F$ is not closed as a mapping from $2^X$ to $2^X$.

The one point set $\{a\}$ is not in the image $F[2^X]$ of the hyperspace $2^X$ via $F$. 
Thus it is enough to show that $\{a\}\in\overline{F[2^X]}$. Let $S(V_0;V_1,\ldots,V_k)$ be a base neigbourhood of $\{a\}$ which simply means here that $a\in\bigcap_{i=0}^kV_i$. Thus the set $\bigcap_{i=0}^kV_i$ is an open neighbourhood of $a$. Hence it must be a co-finite subset of $X$ and $2n\in\bigcap_{i=0}^kV_i$, for some $n\in\mathbb{N}$. This implies that $\{2n\}\in S(V_0;V_1,\ldots,V_k)$. We also have $\{2n\}=F(\{2n-1\})$. Hence $S(V_0;V_1,\ldots,V_k)\cap F[2^X]\neq\emptyset$. We conclude that $\{a\}\in \overline{F[2^X]}\setminus F[2^X]$.  

\end{proof}

We have shown that the Hutchinson operator of a contractive IFS on a compact $T_1$ space may not be closed and then it is not a topological contraction, because this is one of the conditions defining topological contraction. To apply directly Theorem A we assumed in \cite{MMRR} in the hypothesis of Theorem C that the Hutchinson operator was closed. It turns out, however, that to get the conclusion about the unique fixed point of the Hutchinson operator induced by a contractive IFS the assumption that the Hutchinson operator is closed is not necessary. Namely, we prove here the following stronger version of Theorem C.

\begin{thm}\label{attractor2}
If $X$ is a $T_1$ compact space then every contractive IFS has a unique attractor; in other words: the Hutchinson operator induced by this IFS has a unique fixed point.
\end{thm}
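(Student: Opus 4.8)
The plan is to reduce Theorem~\ref{attractor2} to Theorem~A by showing that the Hutchinson operator $F$ induced by a contractive IFS $\mathcal{F}$ on a $T_1$ compact space $X$, while possibly not closed, still satisfies enough structure to force a unique fixed point. Existence is already handed to us for free by Theorem~D, so the entire content is \emph{uniqueness}. First I would record that, since $\mathcal{F}=\{f_1,\dots,f_m\}$ is contractive, the compositions $f_{i_1}\circ\dots\circ f_{i_n}$ shrink $X$ into elements of any open cover for $n$ large; applied to the hyperspace side, this says that for every base neighbourhood $S(V_0;V_1,\dots,V_k)$ of a singleton $\{p\}$, after finitely many iterations every "branch" $F^n$ lands inside sets concentrated near $p$. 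The technical point is that $F^n(E)=\bigcup\{f_{i_1}\circ\dots\circ f_{i_n}[E] : 1\le i_j\le m\}$, so the contractivity of $\mathcal{F}$ controls $F^n$ even though it does not make $F$ itself a topological contraction.

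The key steps, in order. (1) Suppose $K_1, K_2 \in 2^X$ are both fixed points of $F$, i.e. attractors of $\mathcal{F}$; aim for $K_1 = K_2$. (2) Observe $K_j = F^n(K_j)$ for all $n$, hence $K_j = \bigcup_{(i_1,\dots,i_n)} f_{i_1}\circ\dots\circ f_{i_n}[K_j]$. (3) Fix a point $x \in K_1$ and an arbitrary open neighbourhood $U$ of $x$; I want to find a point of $K_2$ in $U$, which by $T_1$-ness plus the arbitrariness of $U$ (and symmetry) will give $K_1 \subseteq \overline{K_2} = K_2$ and vice versa. To do this, extend $U=U_x$ to an open cover $\mathcal{U}$ of $X$ by adding, for each $y \in X \setminus \{x\}$, an open set separating $y$ from $x$; contractivity yields $n$ such that every length-$n$ composition maps $X$ — in particular $K_1$ and $K_2$ — into a single member of $\mathcal{U}$. (4) Since $x \in K_1 = \bigcup_{(i_1,\dots,i_n)} f_{i_1}\circ\dots\circ f_{i_n}[K_1]$, there is a branch $(i_1,\dots,i_n)$ with $x \in f_{i_1}\circ\dots\circ f_{i_n}[K_1]$; the member of $\mathcal{U}$ containing $f_{i_1}\circ\dots\circ f_{i_n}[X]$ must then contain $x$, and the only members of $\mathcal{U}$ containing $x$ are those that are "$\supseteq U_x$-like" — more precisely one arranges the cover so that the only cover element containing $x$ is $U_x$ itself (or is contained in $U_x$). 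Hence $f_{i_1}\circ\dots\circ f_{i_n}[X] \subseteq U_x$, so in particular $f_{i_1}\circ\dots\circ f_{i_n}[K_2] \subseteq U_x$; as this set is nonempty, $K_2 \cap U_x \neq \emptyset$. (5) Conclude $x \in \overline{K_2} = K_2$, giving $K_1 \subseteq K_2$; by symmetry $K_1 = K_2$.

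The main obstacle is step~(3)--(4): arranging the open cover so that the cover element swallowing the image of a contracting branch is \emph{forced} to lie inside the prescribed neighbourhood $U_x$ of $x$. The naive cover $\{U_x\} \cup \{ X \setminus \overline{\{x\}}\text{-type sets}\}$ works only if one is careful that $x$ itself is covered solely by $U_x$; in a mere $T_1$ space points need not be separated from closed sets, so I cannot shrink around $x$ freely. The fix is to take the cover $\mathcal{U} = \{U_x\} \cup \{X \setminus \{x\}\}$ — a legitimate open cover since $\{x\}$ is closed — and note that any set contained in a member of $\mathcal{U}$ and \emph{containing} $x$ must be contained in $U_x$. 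This two-element cover is exactly what contractivity is applied to, and it sidesteps any need for regularity. A secondary, purely bookkeeping obstacle is checking that the nonempty image $f_{i_1}\circ\dots\circ f_{i_n}[K_2]$ really meets $U_x$ and not just $X$; but that is immediate once the relevant branch's total image $f_{i_1}\circ\dots\circ f_{i_n}[X]$ is known to sit inside $U_x$. No compactness beyond what is used to produce the fixed points (Theorem~D) and to invoke contractivity is needed in the uniqueness argument itself.
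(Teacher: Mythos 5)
Your proposal is correct and is essentially the paper's own first proof: contractivity applied to a two-element open cover built from a point of one fixed point and the complement of a singleton (your $\{U_x,\,X\setminus\{x\}\}$; the paper takes $z\in E_1\setminus E_2$ and uses $\{E_2^{\,c},\{z\}^{c}\}$, i.e.\ your argument specialized to $U_x=K_2^{\,c}$), combined with $K_j=F^n(K_j)=\bigcup_{(i_1,\dots,i_n)}f_{i_1}\circ\dots\circ f_{i_n}[K_j]$ to find a branch through the chosen point and to trap the image of the other fixed point. The only line worth making explicit in step (4) is that $f_{i_1}\circ\dots\circ f_{i_n}[K_2]\subseteq K_2$ (immediate from your step (2)), which is what upgrades ``this nonempty image lies in $U_x$'' to $K_2\cap U_x\neq\emptyset$.
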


We present two proofs of this theorem. In both cases the existence of at least one fixed point of the Hutchinson operator induced by any IFS (\cite{MMRR}) is used  but the first proof is rather direct; the second and shorter one depends more heavily on some facts proven in \cite{MMRR}.

\vspace{0.4 cm}

\noindent {\it Proof 1.} Let $\mathcal{F}=\{f_1.\ldots,f_m\}$ be a contractive IFS. Let $F$ be the Hutchinson operator induced by $\mathcal{F}$. By Theorem 5 in \cite{MMRR} $F$ has a fixed point. Let us assume that $E_1,E_2\in2^X$ are two different fixed points of $F$. Let, e.g., $z\in E_1\setminus E_2$.
Let $U=E_2^c$ and $V=\{z\}^c$. We have $U\cup V=X$. Because $\mathcal{F}$ is a contractive IFS there exists $n\in\mathbb{N}$ such that for each sequence $(i_1,\ldots,i_n)\in\{1,\ldots,m\}^n$
$$f_{i_n}\circ\ldots\circ f_{i_1}[X]\subseteq U \;\;\textrm{or}\;\;f_{i_n}\circ\ldots\circ f_{i_1}[X]\subseteq V.$$
Because $E_1$ is a fixed point of $F$ we have $F^n[E_1]=E_1$. Hence there exists $s\in E_1$ and a sequence $(i_1,\ldots,i_n)\in\{1,\ldots,m\}^n$ such that 
$$f_{i_n}\circ\ldots\circ f_{i_1}(s)=z$$
and this implies
$$f_{i_n}\circ\ldots\circ f_{i_1}[X]\subseteq U.$$
Because 
$$f_{i_n}\circ\ldots\circ f_{i_1}[E_2]\subseteq f_{i_n}\circ\ldots\circ f_{i_1}[X]$$
we infer that
$$f_{i_n}\circ\ldots\circ f_{i_1}[E_2]\subseteq U.$$
Because $U\cap E_2=\emptyset$ we obtain 
$$f_{i_n}\circ\ldots\circ f_{i_1}[E_2]\setminus E_2\neq\emptyset$$.
Hence
$$F(E_2)\setminus E_2 =\bigcup_{(j_1,\ldots,j_n)\in\{1,\ldots,m\}^m}f_{j_n}\circ\ldots\circ f_{j_1}\setminus E_2\neq\emptyset,$$
which is an obvious contradiction with $E_2$ being a fixed point of $F$. This concludes the proof.

$\,$\hfill$\Box$

\vspace{0.4 cm}

\noindent{\it Proof 2.} By inspection of the proof of Theorem 4 in \cite{MMRR} (here Theorem B above) one can see that the assumption that the Hutchinson operator was closed was not used to prove that it had property $(\star)$.  

Let $F$ be the Hutchinson operator induced by our IFS $\mathcal{F}$. Let $E_1$ and $E_2$ be two different fixed points of $F$. 
We have $F^n(E_1)=E_1$ and $F^n(E_2)=E_2$, for each $n\in\mathbb{N}$. Hence 
\begin{equation}\label{inclusion}
\{E_1,E_2\}\subseteq F^n[2^X].
\end{equation}
The pair of sets $U=\{E_1\}^c$, $V=\{E_2\}^c$ is an open cover of $2^X$ and, because $F$ satisfies $(\star)$, for some $n$ either $F^n[2^X]\subseteq U$ or $F[2^X]\subseteq V$, but this contradicts (\ref{inclusion}).  

$\,$\hfill$\Box$

{}

\end{document}